\renewcommand{\epsilon}{\varepsilon}
\DeclareSymbolFont{SY}{U}{psy}{m}{n}
\DeclareMathSymbol{\emptyset}{\mathord}{SY}{'306}
\DeclareMathSymbol{\newtimes}{\mathbin}{SY}{'264}
\newcommand{\R}{\mathbb{R}}
\newtheorem{theorem}{Theorem}{\bf}{\it}
{\bf}{\it}
{\bf}{\it}
{\bf}{\it}
{\it}{\rm}
\newtheorem{lemma}[theorem]{Lemma}{\bf}{\it}
{\it}{\rm}
{\bf}{\it}
{\bf}{\it}
{\bf}{\it}
\title{An intermediate value theorem in ordered Banach spaces}
\author[V. Kostrykin]{Vadim Kostrykin}
\address{V.~Kostrykin, FB 08 - Institut f\"{u}r Mathematik,
Johannes Gutenberg-Universit\"{a}t Mainz,
Staudinger Weg 9,
D-55099 Mainz,
Germany}
\email{kostrykin@mathematik.uni-mainz.de}
\author[A.~Oleynik]{Anna Oleynik}
\address{A.~Oleynik, FB 08 - Institut f\"{u}r Mathematik,
Johannes Gutenberg-Universit\"{a}t Mainz,
Stau\-din\-ger Weg 9,
D-55099 Mainz,
Germany}
\email{anna.oleynik@inbox.com}
\subjclass[2010]{47H05, 47H10, 46B40}
\keywords{Fixed point theorems in ordered Banach spaces}
\begin{document}
\maketitle

\begin{abstract}
We consider a monotone increasing operator in an ordered Banach space having $u_-$ and $u_+$ as a strong super- and subsolution, respectively. In contrast with the well studied case $u_+ < u_-$, we suppose that $u_- < u_+$. Under the assumption that the order cone is normal and minihedral, we prove the existence of a fixed point located in the ordered interval $[u_-,u_+].$
\end{abstract}

\footnotetext{This work is supported in part by the Deutsche Forschungsgemeinschaft, grant KO 2936/4-1.}

\bigskip
It is an elementary consequence of the intermediate value theorem for continuous real-valued functions $f:[a_1,a_2]\to\R$ that if either
\begin{equation}\label{eq:1}
f(a_1)>a_1\qquad\text{and}\qquad f(a_2)<a_2
\end{equation}
or
\begin{equation}\label{eq:2}
f(a_1)<a_1\qquad\text{and}\qquad f(a_2)>a_2,
\end{equation}
then $f$ has a fixed point in $[a_1,a_2]$. It is a natural question whether this result can be extended to the case of ordered Banach spaces. A number of fixed point theorems with assumptions of type \eqref{eq:1} are well known, see, e.g.\ \cite[Section 2.1]{Guo}. However, to the best of our knowledge, fixed point theorems with assumptions of type \eqref{eq:2} have not been known so far. In the present note we prove the following fixed point theorem of this type.

\begin{theorem}\label{thm:1}
Let $X$ be a real Banach space with an order cone $K$ satisfying
\begin{itemize}
\item[(a)] $K$ has a nonempty interior,
\item[(b)] $K$ is normal and minihedral.
\end{itemize}
Assume that there are two points in $X$, $u_-\ll u_+$, and a monotone increasing compact continuous operator $T:[u_-,u_+]\to X$. If $u_-$ is a strong supersolution of $T$ and $u_+$ its strong subsolution, that is,
\begin{equation*}
Tu_- \ll u_-\quad\text{and}\quad Tu_+\gg u_+,
\end{equation*}
then $T$ has a fixed point $u_\ast\in[u_-,u_+]$.
\end{theorem}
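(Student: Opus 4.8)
The plan is to convert the problem into a fixed point problem for a compact self-map of $[u_-,u_+]$, obtained by composing $T$ with the lattice retraction onto $[u_-,u_+]$, and then to single out a genuine fixed point of $T$ among the fixed points of this auxiliary map by a fixed point index argument. Since $K$ is minihedral, $X$ is a vector lattice; since $K$ is normal with nonempty interior, the lattice operations are norm continuous (equivalently, passing to the order-unit norm determined by some $e\in\mathrm{int}\,K$ turns $X$ into a Banach lattice with order unit). Define $r\colon X\to[u_-,u_+]$ by $r(v)=(u_-\vee v)\wedge u_+$; it is a continuous retraction. Extend $T$ to a continuous compact map $\bar T\colon X\to X$ with range in the compact convex hull of $T([u_-,u_+])$ (Dugundji extension; note $[u_-,u_+]$ is bounded by normality), and put $\Phi:=r\circ\bar T$. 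Then $\Phi\colon X\to[u_-,u_+]$ is continuous and compact and maps the bounded, closed, convex set $[u_-,u_+]$ into itself, so by Schauder's theorem it has a fixed point there.

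The bridge between $\Phi$ and $T$ is the following median lemma: if $u_\ast$ is a fixed point of $\Phi$ with $u_-\ll u_\ast\ll u_+$, then $Tu_\ast=u_\ast$. Indeed, write $v:=Tu_\ast$, so that $u_\ast=(u_-\vee v)\wedge u_+$, and use the elementary fact that $y\wedge z=0$ with $y\in\mathrm{int}\,K$, $z\in K$ forces $z=0$ (for small $\varepsilon>0$ one has $0\le\varepsilon z\le y$ and $\varepsilon z\le z$, hence $\varepsilon z\le y\wedge z=0$). Applying it with $y=u_+-u_\ast\in\mathrm{int}\,K$ and $z=((u_-\vee v)-u_+)^+$ (which are disjoint) gives $u_-\vee v\le u_+$, hence $u_\ast=u_-\vee v$; applying it again with $y=u_\ast-u_-\in\mathrm{int}\,K$ and $z=(u_--v)^+$ gives $v\ge u_-$, hence $u_\ast=u_-\vee v=v$. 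Thus it is enough to produce a fixed point of $\Phi$ in $\mathrm{int}[u_-,u_+]$, which is nonempty since it contains $(u_-+u_+)/2$.

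For this I would use a fixed point index computation. Since $\bar Tu_-=Tu_-\ll u_-$ and $\ll$ is an open relation, $\bar Tu\ll u_-$ for all $u$ near $u_-$, so $\Phi\equiv u_-$ on a neighborhood of $u_-$; likewise $\Phi\equiv u_+$ near $u_+$. Hence $u_-$ and $u_+$ are isolated fixed points of $\Phi$, each of index $1$. On the other hand $\Phi$ is a compact self-map of any ball containing $[u_-,u_+]$, so the total index of $\mathrm{Fix}(\Phi)$ equals $1$; excising small neighborhoods of $u_\pm$, the index of $\Phi$ on the remainder is $-1$, so $\mathrm{Fix}(\Phi)$ contains points other than $u_\pm$, and one has to show that at least one can be chosen in $\mathrm{int}[u_-,u_+]$.

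This is the main obstacle: a fixed point of $\Phi$ on $\partial[u_-,u_+]\setminus\{u_-,u_+\}$ need not be a fixed point of $T$, and $T$ is uncontrolled on the (generally large) boundary of the order interval, so one must rule out that all the extra index is absorbed by such spurious boundary fixed points. I expect the resolution to use monotonicity of $T$, which has not entered yet; two plausible routes are: (i) pick, via Zorn's lemma, a maximal element $\bar u$ of the compact set $\{u\in\mathrm{Fix}(\Phi):Tu\le u_+\}$ (compactness of $\Phi$ makes chains in this set have maxima) and show by a monotone-iteration argument that maximality forces $T\bar u\ge u_-$, whence $T\bar u\in[u_-,u_+]$ and $T\bar u=\Phi\bar u=\bar u$; or (ii) homotope $\Phi$ to the contraction $v\mapsto(1-\varepsilon)r(v)+\varepsilon w$ toward an interior point $w$ and check, using monotonicity together with $Tu_-\ll u_-\ll u_+\ll Tu_+$, that this homotopy has no fixed points on $\partial[u_-,u_+]$, so that $\Phi$ has index $1$ there. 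Carrying one of these arguments through rigorously — in particular controlling the interplay between monotonicity and the boundary behaviour of $T$ — is the crux of the proof.
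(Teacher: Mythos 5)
Your reduction is sound and in fact coincides with the paper's: your $\Phi=r\circ\bar T$ is, up to the (unnecessary) Dugundji extension, exactly the paper's truncated operator $\widehat{T}u=\sup\{\inf\{Tu,u_+\},u_-\}$, and your median lemma correctly shows that a fixed point $u_\ast$ of the truncation with $u_-\ll u_\ast\ll u_+$ is a fixed point of $T$. But the proposal stops precisely where the theorem lies: you never produce such a fixed point. Your index count (each of $u_\pm$ is an isolated fixed point of $\Phi$ of index $1$, total index $1$, hence leftover index $-1$) only gives fixed points of $\Phi$ different from $u_\pm$, and, as you yourself say, these may touch the order-boundary of $[u_-,u_+]$, where they need not be fixed points of $T$. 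Neither of your two suggested repairs is carried out, and neither works as stated: in (ii) the homotopy cannot be fixed-point-free on $\partial[u_-,u_+]$, since $u_\pm$ are themselves fixed points of $\Phi$ lying on that boundary, so the index of $\Phi$ over the interior of the order interval is not even defined before one has somehow excised them; in (i) no reason is given why maximality of $\bar u$ should force $T\bar u\ge u_-$. So the crux is genuinely missing.

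What the paper does at this point is to manufacture strict sub- and supersolutions of $\widehat{T}$ \emph{inside} the interval: since $u\mapsto u-Tu$ is continuous and the relation $\gg 0$ is open, points $p_\pm=(1-t_\pm)u_-+t_\pm u_+$ with $t_-$ near $0$ and $t_+$ near $1$ satisfy $u_-\ll p_-\ll p_+\ll u_+$ together with $Tp_-\ll p_-$ and $Tp_+\gg p_+$, and a short lattice argument (via the Kakutani--Krein representation $X\cong C(Q)$) then gives $\widehat{T}p_-<p_-$ and $\widehat{T}p_+>p_+$. Since $\widehat{T}$ is monotone increasing, image compact, maps $[u_-,u_+]$ into itself and fixes $u_\pm$, Amann's three-fixed-point theorem applied to $u_-\ll p_-<p_+\ll u_+$ yields a third fixed point $u_\ast$ of $\widehat{T}$ with $u_-<u_\ast<u_+$, $u_\ast\notin[u_-,p_-]$, $u_\ast\notin[p_+,u_+]$, which the paper then identifies with a fixed point of $T$. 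Amann's theorem is exactly the rigorous form of your index heuristic; the decisive ingredient you are missing is the pair $p_\pm$: by monotonicity $\widehat{T}$ leaves the order intervals $[u_-,p_-]$ and $[p_+,u_+]$ invariant, and it is these order intervals, not metric balls around $u_\pm$, that carry index $1$ and let the leftover index $-1$ be localized away from $u_\pm$. Your median lemma would then be a clean (indeed more explicit than the paper's one-line) way to pass from a fixed point of $\widehat{T}$ to one of $T$, but without the $p_\pm$/Amann step, or a completed version of one of your sketches, the proposal does not prove the theorem.
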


Theorem \ref{thm:1} generalizes an idea developed by the present authors in \cite{KO}, where the existence of solutions to a certain nonlinear integral equation of Hammerstein type has been shown.

Before we present the proof we recall some notions. We write $u\geq v$ if $u-v\in K$, $u>v$ if $u\geq v$ and $u\neq v$, and $u\gg v$ if $u-v \in \overset{\circ}{K}$, where $\overset{\circ}{K}$ is the interior of the cone $K.$

A cone $K$ is called \emph{minihedral} if for any pair $\{x,y\}$, $x,y\in X$, bounded above in order there exists the least upper bound $\sup\{x,y\}$, that is, an element $z\in X$ such that
\begin{itemize}
\item[(1)] $x\leq z$ and $y\leq z$,
\item[(2)] $x\leq z'$ and $y\leq z'$ implies that $z\leq z'$.
\end{itemize}
Obviously, a cone $K$ is minihedral if and only if for any pair $\{x,y\}$, $x,y\in X$, bounded below in order there exists the greatest lower bound $\inf\{x,y\}$. If a cone has a nonempty interior, then any pair $x,y\in X$ is bounded above in order. Hence, $\sup\{x,y\}$ and $\inf\{x,y\}$ exist for all $x,y\in X$.

A cone $K$ is called \emph{normal} if there exists a constant $N>0$ such that $x\leq y$, $x,y\in K$ implies $\|x\|_X \leq N \|y\|_X$.

By the Kakutani -- Krein brothers  theorem \cite[Theorem 6.6]{Krasnoselski} a real Banach space $X$ with an order cone $K$ satisfying assumptions (a) and (b) of Theorem \ref{thm:1} is isomorphic to the Banach space $C(Q)$ of continuous functions on a compact Hausdorff space $Q$. The image of $K$ under this isomorphism is the cone of nonnegative continuous functions on $Q$.


An operator $T$ acting in the Banach space $X$ is called \emph{monotone increasing} if $u\leq v$ implies $Tu\leq Tv$.

We turn to the proof of Theorem \ref{thm:1}.

Consider the operator $\widehat{T}: [u_-,u_+]\to X$ defined by
\begin{equation}\label{eq:T:hut}
\widehat{T}u := \sup\left\{\inf\{Tu, u_+\}, u_- \right\}.
\end{equation}
Since $\inf\{Tu_+, u_+\}=u_+$ and $\sup\{u_+,u_-\}=u_+$, $u_+$ is a fixed point of the operator $\widehat{T}$. Similarly one shows that $u_-$ is also a fixed point.

\begin{lemma}\label{lem:prop}
The operator $\widehat{T}$ is continuous, monotone increasing, compact, and maps the order interval $[u_-,u_+]$ into itself.
\end{lemma}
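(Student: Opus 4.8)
The plan is to verify the four asserted properties of $\widehat{T}$ in turn, exploiting the explicit formula \eqref{eq:T:hut} and the Kakutani–Krein representation $X\cong C(Q)$, under which $\sup$ and $\inf$ become pointwise maximum and minimum of continuous functions.

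First I would establish that $\widehat{T}$ maps $[u_-,u_+]$ into itself. By construction $\widehat{T}u\ge u_-$, since $\widehat{T}u=\sup\{\inf\{Tu,u_+\},u_-\}\ge u_-$. For the upper bound, note that $\inf\{Tu,u_+\}\le u_+$, and also $u_-\le u_+$; since $u_+$ is then an upper bound for both $\inf\{Tu,u_+\}$ and $u_-$, property (2) of the least upper bound gives $\widehat{T}u\le u_+$. Hence $\widehat{T}u\in[u_-,u_+]$.

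Next, monotonicity: if $u\le v$ in $[u_-,u_+]$, then $Tu\le Tv$ since $T$ is monotone increasing, whence $\inf\{Tu,u_+\}\le\inf\{Tv,u_+\}$ and then $\sup\{\inf\{Tu,u_+\},u_-\}\le\sup\{\inf\{Tv,u_+\},u_-\}$, using that both $\inf$ and $\sup$ are monotone in each argument — a fact that is immediate from the defining properties (1)–(2) (or from the pointwise picture). For continuity, I would pass to $C(Q)$ and observe that the lattice operations $(f,g)\mapsto\max\{f,g\}$ and $(f,g)\mapsto\min\{f,g\}$ are continuous from $C(Q)\times C(Q)$ to $C(Q)$ in the supremum norm, since $|\max\{f_1,g_1\}-\max\{f_2,g_2\}|\le|f_1-f_2|+|g_1-g_2|$ pointwise, and similarly for $\min$; because the Banach space isomorphism of Kakutani–Krein is bicontinuous, the same holds in $X$ up to a constant. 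Thus $\widehat{T}u=\sup\{\inf\{Tu,u_+\},u_-\}$ is a composition of the continuous map $u\mapsto Tu$ with these continuous lattice operations (with the fixed arguments $u_+$ and $u_-$), hence continuous.

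Finally, compactness. Since $T:[u_-,u_+]\to X$ is compact, $T([u_-,u_+])$ is relatively compact; applying the continuous map $v\mapsto\sup\{\inf\{v,u_+\},u_-\}$ sends this relatively compact set to a relatively compact set, and $\widehat{T}([u_-,u_+])$ is contained in that image. Hence $\widehat{T}$ is compact. The main obstacle — really the only nontrivial point — is the joint continuity of the lattice operations, which is why the Kakutani–Krein representation is invoked; once that is in hand, all four properties follow routinely.
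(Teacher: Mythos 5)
Your proof is correct, and it verifies the same four properties in essentially the same order as the paper, but two of the sub-arguments are handled differently. For continuity, the paper simply cites Corollary 3.1.1 in \cite{Chueshov} for the continuity of $u\mapsto\sup\{u,v\}$ and $u\mapsto\inf\{u,v\}$ with fixed $v$, whereas you prove the (even joint, Lipschitz) continuity of the lattice operations yourself via the Kakutani--Krein representation $X\cong C(Q)$, under which $\sup$ and $\inf$ become pointwise $\max$ and $\min$; this makes the argument self-contained at the cost of leaning on the representation theorem (which the paper in any case invokes for Lemma \ref{lem:epsilon}), and it implicitly uses that the isomorphism is an order isomorphism, so least upper bounds in $X$ correspond to pointwise maxima in $C(Q)$ --- a point worth stating explicitly. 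For the invariance of $[u_-,u_+]$, the paper first observes that $u_\pm$ are fixed points of $\widehat{T}$ (which uses $Tu_-\ll u_-$, $Tu_+\gg u_+$) and then sandwiches $\widehat{T}u$ by monotonicity, while you argue directly from the defining properties of $\sup$ and $\inf$ that $u_-\le\widehat{T}u\le u_+$; your route is slightly more elementary and does not use the super/subsolution hypotheses at all. The monotonicity and compactness arguments coincide with the paper's (the paper phrases compactness with sequences, you with images of relatively compact sets), so there is no gap.
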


\begin{proof}
For any $v\in K$ the maps $u\mapsto \sup\{u,v\}$ and $u\mapsto \inf\{u,v\}$ are continuous, see, e.g., Corollary 3.1.1 in \cite{Chueshov}. Due to the continuity of $T$ it follows immediately that $\widehat{T}$ is continuous as well.
The operator $\widehat{T}$ is monotone increasing since $\inf$ and $\sup$ are monotone increasing with respect to each argument. Therefore, for any $u \in [u_{-},u_{+}]$ we have
  \begin{equation*}
  u_{-}=\widehat{T}u_{-}\leq\widehat{T}u\leq\widehat{T}u_{+}=u_{+}.
  \end{equation*}
Let $(u_n)$ be an arbitrary sequence in $[u_{-},u_{+}]$. Since $T$ is compact, $(Tu_n)$ has a subsequence $(Tu_{n_k})$ converging to some $v\in X$. {}From the continuity of $\widehat{T}$ it follows that the sequence $(\widehat{T}u_{n_k})$ converges to $\sup\left\{\inf\{v, u_+\}, u_- \right\}$, thus, proving that the range of $\widehat{T}$ is relatively compact.
\end{proof}

\begin{lemma}\label{lem:epsilon}
There exist $p_\pm\in X$ with
\begin{equation*}
u_- \ll p_- \ll p_+ \ll u_+
\end{equation*}
and
\begin{equation*}
\widehat{T} p_- < p_-, \qquad \widehat{T} p_+ > p_+.
\end{equation*}
\end{lemma}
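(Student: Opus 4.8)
The plan is to perturb the endpoints $u_\pm$ slightly into the interior of $[u_-,u_+]$ and exploit the continuity of $T$ together with the openness of $\overset{\circ}{K}$. Fix an interior point $e\in\overset{\circ}{K}$ (for instance $e=u_+-u_-$, which is interior since $u_-\ll u_+$), and for $\epsilon>0$ set $p_-^\epsilon:=u_-+\epsilon e$ and $p_+^\epsilon:=u_+-\epsilon e$. For all sufficiently small $\epsilon$ one has the chain $u_-\ll p_-^\epsilon\ll p_+^\epsilon\ll u_+$: the two outer relations are immediate because the corresponding differences equal $\epsilon e\in\overset{\circ}{K}$, while $p_+^\epsilon-p_-^\epsilon=(u_+-u_-)-2\epsilon e\to u_+-u_-\in\overset{\circ}{K}$, so this difference lies in the open set $\overset{\circ}{K}$ for $\epsilon$ small. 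In particular $p_\pm^\epsilon\in[u_-,u_+]$, so $T$ and $\widehat T$ are defined at these points.

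For the estimate at $p_-^\epsilon$, write $p_-^\epsilon-Tp_-^\epsilon=(u_--Tp_-^\epsilon)+\epsilon e$, which converges to $u_--Tu_-$ as $\epsilon\to0^+$ by continuity of $T$. Since $u_--Tu_-\in\overset{\circ}{K}$ by hypothesis and $\overset{\circ}{K}$ is open, there is $\epsilon_0>0$ with $Tp_-^\epsilon\ll p_-^\epsilon$ for $0<\epsilon<\epsilon_0$; combining this with $p_-^\epsilon\ll u_+$ and $\overset{\circ}{K}+\overset{\circ}{K}\subseteq\overset{\circ}{K}$ gives $Tp_-^\epsilon\ll u_+$, hence $\inf\{Tp_-^\epsilon,u_+\}=Tp_-^\epsilon$ and therefore $\widehat T p_-^\epsilon=\sup\{Tp_-^\epsilon,u_-\}$. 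As $p_-^\epsilon$ is a common upper bound of $Tp_-^\epsilon$ and $u_-$ (both are $\ll p_-^\epsilon$), the least-upper-bound property yields $\widehat T p_-^\epsilon\leq p_-^\epsilon$. The analogous computation at the other endpoint gives $Tp_+^\epsilon-p_+^\epsilon\to Tu_+-u_+\in\overset{\circ}{K}$, so $Tp_+^\epsilon\gg p_+^\epsilon$ for small $\epsilon$; then $p_+^\epsilon$ is a common lower bound of $Tp_+^\epsilon$ and $u_+$ (the latter because $u_+\gg p_+^\epsilon$), whence $\inf\{Tp_+^\epsilon,u_+\}\geq p_+^\epsilon$ and $\widehat T p_+^\epsilon=\sup\{\inf\{Tp_+^\epsilon,u_+\},u_-\}\geq\inf\{Tp_+^\epsilon,u_+\}\geq p_+^\epsilon$.

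It remains to promote the weak inequalities $\widehat T p_-^\epsilon\leq p_-^\epsilon$ and $\widehat T p_+^\epsilon\geq p_+^\epsilon$ to strict order relations $<$, and this is the one genuinely delicate point, since the abstract least-upper-bound property by itself does not exclude equality. Here I would pass to the representation $X\cong C(Q)$ furnished by the Kakutani--Krein theorem, under which $\sup$ and $\inf$ become pointwise maximum and minimum and $v\ll w$ means $v(q)<w(q)$ for all $q\in Q$, hence (by compactness of $Q$) $\min_q(w(q)-v(q))>0$. In this picture $Tp_-^\epsilon$ and $u_-$ are both bounded away from $p_-^\epsilon$ uniformly on $Q$, so their pointwise maximum $\widehat T p_-^\epsilon$ satisfies $\widehat T p_-^\epsilon\ll p_-^\epsilon$, and symmetrically $\widehat T p_+^\epsilon\gg p_+^\epsilon$; in particular the required strict relations hold. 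Choosing $\epsilon>0$ small enough to satisfy simultaneously the ordering of the first paragraph and the two smallness conditions above, and setting $p_\pm:=p_\pm^\epsilon$, completes the proof. (Equivalently, one can avoid $C(Q)$ by invoking the lattice identity $c-\sup\{a,b\}=\inf\{c-a,c-b\}$ and the fact that the infimum of two elements of $\overset{\circ}{K}$ again lies in $\overset{\circ}{K}$.)
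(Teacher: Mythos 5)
Your proof is correct and follows essentially the same route as the paper: you perturb along the segment joining $u_-$ and $u_+$ (with $e=u_+-u_-$ your $p_\pm^\epsilon$ are exactly the paper's $p(t_\pm)$), use continuity of $T$ to keep the strong super-/subsolution property at the perturbed points, and invoke the Kakutani--Krein representation $X\cong C(Q)$ to settle the delicate strictness step. The only differences are cosmetic: the paper obtains $\widehat T p_- < p_-$ by contradiction, whereas you derive $\widehat T p_-^\epsilon\ll p_-^\epsilon$ directly from the pointwise picture (and your parenthetical lattice-identity alternative would even avoid $C(Q)$ altogether), but the underlying argument is the same.
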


\begin{proof}
Due to $T u_- \ll u_-$ there is a $\delta>0$ such that $B_\delta(u_- - Tu_-)\subset\overset{\circ}{K}$. The preimage of $B_\delta(u_- - Tu_-)$ under the continuous mapping $u\mapsto u-Tu$ contains a ball $B_\epsilon(u_-)$. Hence, $u-Tu\gg 0$ holds for all $u\in B_\epsilon(u_-)$. By the same argument $u-Tu\ll 0$ for all $u\in B_\epsilon(u_+)$. Choosing $\epsilon>0$ sufficiently small we can achieve that $B_\epsilon(u_-)\cap B_\epsilon(u_+)=\emptyset$.

Set $p(t):=\{(1-t)u_- + tu_+\, |\, t\in[0,1]\}$. We choose $t_-\in(0,1)$ so small that $p_-:=p(t_-)\in B_\epsilon(u_-)$ and $t_+\in(0,1)$ so close to $1$ that $p_+:=p(t_+)\in B_\epsilon(u_+)$. Then we have $u_-\ll p_-\ll p_+\ll u_+$ and
\begin{equation*}
T p_- \ll p_-,\qquad T p_+ \gg p_+.
\end{equation*}

Due to $Tp_-\ll p_-$ and $p_-\ll u_+$ we have $\inf\{Tp_-, u_+\}= Tp_-$. Further, we obtain
\begin{equation*}
\sup\{T p_-, u_-\} \leq \sup\{p_-,u_-\} = p_-.
\end{equation*}
{}From $Tp_-\ll p_-$ it follows that there is an element $z\ll 0$ such that $Tp_-=p_-+z$. Assume that $\sup\{T p_-, u_-\} = p_-$. Then we have $\sup\{z, u_--p_-\}=0$. However, in view of the Kakutani -- Krein brothers theorem, $u_- - p_-\ll 0$ implies $\sup\{z, u_--p_-\}\ll 0$. Thus, it follows that $\sup\{T p_-, u_-\} \neq p_-$ and, therefore, $\widehat{T} p_- < p_-$. Similarly one shows that $\widehat{T} p_+ > p_+$.
\end{proof}

The main tool for the proof of Theorem \ref{thm:1} is Amann's theorem on three fixed points (see, e.g., \cite[Theorem 7.F and Corollary 7.40]{Zeidler}):

\begin{theorem}\label{thm:1.3}
Let $X$ be a real Banach space with an order cone having a nonempty interior. Assume there are four points in $X$
\begin{equation*}
p_1 \ll p_2 < p_3 \ll p_4
\end{equation*}
and a monotone increasing image compact operator $\widehat{T}:[p_1,p_4]\to X$ such that
 \begin{equation*}
\widehat{T}p_1 = p_1,\quad \widehat{T} p_2 < p_2,\quad \widehat{T} p_3 > p_3,\quad \widehat{T} p_4 = p_4.
\end{equation*}
Then $\widehat{T}$ has a third fixed point $p$ satisfying $p_1<p<p_4$, $p\notin[p_1,p_2]$, and $p\notin[p_3,p_4]$.
\end{theorem}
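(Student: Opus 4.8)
The plan is to prove Theorem~\ref{thm:1.3} by means of the fixed point index for compact maps, using the additivity of the index to localize a fixed point in the region strictly between the two prescribed sub-intervals. Since $\widehat{T}$ is image compact, its range is contained in a compact convex set, so the index $i(\widehat{T},\,\cdot\,,M)$ is well defined for $\widehat{T}$ regarded as a compact self-map of the closed convex retract $M:=[p_1,p_4]$; I shall use its three standard properties: normalization ($i(\widehat{T},M,M)=1$ whenever $\widehat{T}(M)\subseteq M$), additivity over disjoint open pieces, and the solution property (a nonzero index forces a fixed point). The nonempty interior of the cone guarantees that the order intervals below have nonempty interior, which is what makes the order-interval lemmas applicable. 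First I would check that $\widehat{T}$ maps $M$ into itself: for $u\in[p_1,p_4]$, monotonicity together with $\widehat{T}p_1=p_1$ and $\widehat{T}p_4=p_4$ gives $p_1=\widehat{T}p_1\leq\widehat{T}u\leq\widehat{T}p_4=p_4$. Hence $i(\widehat{T},M,M)=1$.

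Next I would treat the two outer sub-intervals $M_1:=[p_1,p_2]$ and $M_2:=[p_3,p_4]$. Using monotonicity together with $\widehat{T}p_2<p_2$ and $\widehat{T}p_3>p_3$, each is seen to be mapped into itself: for $u\in M_1$ one has $p_1=\widehat{T}p_1\leq\widehat{T}u\leq\widehat{T}p_2<p_2$, and symmetrically for $M_2$. These are ``attracting'' order intervals, and the key input is Amann's index lemma for increasing compact self-maps of an order interval, which asserts that the index of such a map over a suitable open neighbourhood of the interval equals $+1$ (the lemma is proved by deforming $\widehat{T}$, through a monotone homotopy, to a constant map taking a value in the interior of the interval). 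Concretely I would fix two disjoint open subsets $O_1,O_2$ of $M$ with $M_1\subseteq O_1$ and $M_2\subseteq O_2$, chosen from the order structure (e.g.\ shrunk versions of $\{u\in M:u\not\geq p_3\}$ and $\{u\in M:u\not\leq p_2\}$, which contain $M_1$ and $M_2$ respectively) and arranged so that $\widehat{T}$ has no fixed point on $\partial O_1\cup\partial O_2$; Amann's lemma then yields $i(\widehat{T},O_1,M)=i(\widehat{T},O_2,M)=1$.

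With these values in hand the conclusion is a short computation. Setting $O_0:=M\setminus(\overline{O_1}\cup\overline{O_2})$, which is open in $M$ and disjoint from $O_1$ and $O_2$, and using that $\widehat{T}$ has no fixed points on $\partial O_1\cup\partial O_2$, the additivity of the index gives
\[
1=i(\widehat{T},M,M)=i(\widehat{T},O_1,M)+i(\widehat{T},O_2,M)+i(\widehat{T},O_0,M)=2+i(\widehat{T},O_0,M),
\]
so that $i(\widehat{T},O_0,M)=-1\neq0$. By the solution property, $\widehat{T}$ has a fixed point $p\in O_0$. Since $O_0$ is disjoint from $M_1$ and $M_2$, we obtain $p\notin[p_1,p_2]$ and $p\notin[p_3,p_4]$; and because $p\in M$ with $p_1\in O_1$, $p_4\in O_2$ forcing $p\neq p_1,p_4$, we conclude $p_1<p<p_4$, as required.

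The main obstacle is the pair of index computations $i(\widehat{T},O_i,M)=1$, and in particular the requirement that $\widehat{T}$ carry no fixed point on the boundaries $\partial O_1,\partial O_2$. This is exactly where the strict inequalities $\widehat{T}p_2<p_2$ and $\widehat{T}p_3>p_3$ and the monotonicity of $\widehat{T}$ must be exploited: they are what keep the fixed points produced inside an attracting sub-interval away from the separating faces, so that the order-defined neighbourhoods can be chosen with fixed-point-free boundaries. Carrying this out robustly (rather than in the easier situation where the inequalities hold in the interior sense $\ll$) is the technical heart of the argument, and it is precisely the content encapsulated in Amann's order-interval index lemma, on which I would rely.
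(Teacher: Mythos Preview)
The paper does not give its own proof of Theorem~\ref{thm:1.3}; it is quoted verbatim as Amann's three fixed point theorem, with a reference to \cite[Theorem 7.F and Corollary 7.40]{Zeidler}, and then used as a black box in the proof of Theorem~\ref{thm:1}. So there is nothing in the paper to compare your argument against.

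That said, your outline is precisely the standard proof one finds in Zeidler (and in Amann's original work): fixed point index on the retract $M=[p_1,p_4]$, $i(\widehat T,M,M)=1$ by normalization, index $1$ over neighbourhoods of each of the two invariant sub-intervals, and additivity forcing index $-1$ on the complement. The global logic is correct.

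Two technical remarks. First, your suggested open sets $\{u\in M:u\not\geq p_3\}$ and $\{u\in M:u\not\leq p_2\}$ are in general \emph{not} disjoint (already in $\R^2$ with the standard cone almost every point lies in both), so ``shrunk versions'' conceals genuine work; the usual device is to use that $\widehat T$ maps $[p_1,p_2]$ and $[p_3,p_4]$ into themselves and that these two closed sets are disjoint, then take disjoint open neighbourhoods on which the index can be computed. Second, you are right that the real content sits in showing $i(\widehat T,O_i,M)=1$ with fixed-point-free boundaries, and that this is exactly Amann's order-interval index lemma; note that the hypothesis that the cone has nonempty interior is used here, since one needs the strong inequalities $p_1\ll p_2$ and $p_3\ll p_4$ to produce open order neighbourhoods into which the iterates of $\widehat T$ eventually fall.
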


Recall that the operator is called \emph{image compact} if it is continuous and its image is a relatively compact set.

We choose $p_1=u_-$, $p_2=p_-$, $p_3=p_+$, $p_4=u_+$, where $p_\pm$ as in Lemma \ref{lem:epsilon}. Since the cone $K$ is normal, by Theorem 1.1.1 in \cite{Guo}, the order interval $[u_{-},u_{+}]$  is norm bounded. Thus, the domain of $\widehat{T}$ is a bounded  set. Since $\widehat{T}$ is compact, it is also image compact.

Theorem \ref{thm:1.3} yields the existence of a fixed point $u_\ast$ of operator $\widehat{T}$ satisfying $u_- < u_\ast < u_+$. Obviously, $u_\ast$ is a fixed point of the operator $T$ as well. This observation completes the proof of Theorem \ref{thm:1}.

\subsection*{Acknowledgments}  The authors thank H.-P.~Heinz for useful comments.
 This work has been supported in part by the Deutsche
Forschungsgemeinschaft, grant KO 2936/4-1.


\begin{thebibliography}{5}



\bibitem{Chueshov} I.~Chueshov,
\textit{Monotone Random Systems Theory and Applications},
Lecture Notes in Mathematics, Vol.~1779. Springer, Berlin, 2002.

\bibitem{Guo} D.~Guo and V.~Lakshmikantham, \textit{Nonlinear Problems in Abstract Cones}, Academic Press, Boston, 1988.

\bibitem{KO} V.~Kostrykin and A.~Oleynik, \textit{On the existence of unstable bumps in neural networks}, Preprint
\texttt{arXiv:1112.2941 [math.DS]} (2011).

\bibitem{Krasnoselski} M.~A.~Krasnosel'skij, Je.~A.~Lifshits, and A.~V.~Sobolev,
\textit{Positive Linear Systems.
The Method of Positive Operators}, Sigma Series in Applied Mathematics, Vol.~5. Heldermann Verlag, Berlin, 1989.


\bibitem{Zeidler} E.~Zeidler, \textit{Nonlinear Functional Analysis, Vol.1: Fixed-Point Theorems}, Springer, New York, 1986.

\end{thebibliography}
\end{document}